\newtheorem{theorem}{Theorem}[section]
\newtheorem{lemma}[theorem]{Lemma}
\newcommand{\diag}{\mathrm{diag}}
\newcommand{\Tr}{\mathrm{Tr}}
\begin{document}

\title{Solutions of Backward Stochastic Differential Equations on Markov Chains}

\author{Samuel N. Cohen\\ Department of Mathematics, University of Adelaide\\
samuel.cohen@adelaide.edu.au\\ \\
Robert J. Elliott\\ Haskayne School of Business, University of Calgary, Calgary\\ Department of Mathematics, University of Adelaide\\relliott@ucalgary.ca\\http://www.ucalgary.ca/$\sim$relliott}

\date{July 2008}
\maketitle

\begin{abstract}
We consider backward stochastic differential equations (BSDEs) related to finite state, continuous time Markov chains. We show that appropriate solutions exist for arbitrary terminal conditions, and are unique up to sets of measure zero. We do not require the generating functions to be monotonic, instead using only an appropriate Lipschitz continuity condition.
\end{abstract}

\section{Introduction}
Consider a continuous time, finite state Markov chain $X=\{X_t, t\in[0,T]\}$. We identify the states of this process with the unit vectors $e_i$ in $\mathbb{R}^N$, where $N$ is the number of states of the chain. 

We consider stochastic processes defined on the filtered probability space ($\Omega$, $\mathcal{F}$, $\{\mathcal{F}_t\}$, $\mathbb{P}$), where $\{\mathcal{F}_t\}$ is the completed natural filtration generated by the $\sigma$-fields $\mathcal{F}_t=\sigma(\{X_u, u\leq t\}, F\in \mathcal{F}_T:\mathbb{P}(F)=0) $, and $\mathcal{F}=\mathcal{F}_T$. Note that, as $X$ is right-continuous, this filtration is right-continuous. If $A_t$ denotes the rate matrix for $X$ at time $t$, then this chain has the representation
\begin{equation} \label{eq:MarkovChainRep}
X_t= X_0 + \int_{]0,t]} A_u X_u du + M_t
\end{equation}
where $M_t$ is a martingale. (See Appendix B of \cite{Elliott1994}.)

For an $\mathcal{F}_T$ measurable, $\mathbb{R}^K$ valued, $\mathbb{P}$-square integrable random variable $Q$, we shall discuss equations of the form
\begin{equation} \label{eq:GeneralBSDE}
Z_t+ \int_{]t,T]} F(\omega, u, Z_u, Y_u)du+\int_{]t,T]}[G(\omega, u-,Z_{u-})+Y_{u-}] dM_t = Q
\end{equation}
for functions $F: \Omega\times[0,T]\times \mathbb{R}^K\times \mathbb{R}^{K\times N}\rightarrow \mathbb{R}^K$ and $G:\Omega\times[0,T]\times \mathbb{R}^K \rightarrow \mathbb{R}^{K\times N}$. These functions are assumed to be progressively measurable, i.e. $F(., t, Z_t, Y_t)$ and $G(., t, Z_t)$ are $\mathcal{F}_t$ measurable for all $t\in[0,T]$.

 We seek a solution of (\ref{eq:GeneralBSDE}), that is a pair $(Z, Y)$, where $Z$ is an $\mathbb{R}^K$ valued adapted process and $Y$ is an $\mathbb{R}^{K\times N}$ adapted process. We shall address this in four stages: firstly we shall show a martingale representation theorem in this framework, then we shall use this to show the existence and uniqueness of solutions in three stages of increasing complexity in $F$ and $G$. This is essentially the same approach as in \cite{Pardoux1990}; however the details are not the same as our dynamics differ. The key result presented here is Theorem \ref{thm:GeneralBSDESoln}, an existence and uniqueness result. This result can be seen as a special case of that obtained by \cite{El1997a}; however we here present explicit formulae for certain quantities of which they assume the existence, and, by establishing a martingale representation theorem, do not require the existence of a `non-hedgeable' process.
 
 As a side note, observe that (\ref{eq:GeneralBSDE}) is equivalent to 
 \[Z_t+ \int_{]t,T]} F^*(\omega, u, Z_u, Y_u)du+\int_{]t,T]}[G(\omega, u-,Z_{u-})+Y_{u-}(\omega)] dX_t = Q\]
where $F^*(\omega, u, Z_u, Y_u) = F(\omega, u, Z_u, Y_u) - [G(\omega, u-, Z_{u-}) +Y_{u-}(\omega)]A_uX_u(\omega)$. 

\section{Preliminary Concerns}
We note that the optional quadratic variation of $M_t$ is given by the matrix process
\[[M,M]_t = \sum_{0<u\leq t}\Delta M_u \Delta M^*_u.\]
Recalling that $A$ is the rate matrix of the Markov chain $X$, the predictable quadratic variation is
\[ \langle M, M \rangle_t = \int_{]0,t]}[\diag (A_uX_u) - \diag(X_u) A_u^* - A_u\diag(X_u)]du.\]
This can be seen by considering
\[\diag(X_t) = \diag(X_0) + \int_{]0,t]}\diag(A_uX_u)du + \int_{]0,T]} \diag(dM_u)\]
and
\[\begin{split}
\diag(X_t) &= X_tX_t^*\\
&= X_0X_0^* + \int_{]0,t]} X_u X_u^*A_u^* du + \int_{]0,t]} X_{u-}dM_u^* + \int_{]0,t]} A_u X_u X_u^*du \\
&\qquad + \int_{]0,t]} dM_u X_{u-}^* + \sum_{0<u\leq t} \Delta M_u \Delta M_u^*\end{split}.\]
Equating these two gives
\[[M,M]_t = L_t+\int_{]0,t]} [\diag(A_uX_u)-\diag(X_u) A_u^* - A_u \diag(X_u)]du \]
for some martingale $L$. This in turn implies that 
\[ \langle M, M \rangle_t = \int_{]0,t]}[\diag (A_uX_u) - \diag(X_u) A_u^* - A_u\diag(X_u)]du\]
as desired.

Define the following quantities:
\[\begin{split}
\langle C, D \rangle &= \Tr(CD^*),\\
 \|C\|^2 &= \langle C, C \rangle,\\
\langle C, D \rangle_{V} &= \Tr(C[\diag(A_uV)-\diag(V)A_u^* - A_u\diag(V)]D^*),\\
 \|C\|^2_{V} &= \langle C, C \rangle_{V},
\end{split}\]
where $V$ is a (basis) vector in $\mathbb{R}^N$. As the matrix \[\diag(A_uV)-\diag(V)A_u^* - A_u\diag(V)\] is symmetric and positive (semi-)definite, this is a well defined (semi-)norm. One notable feature of this notation is that
\[\int_{]t,T]}\|C\|^2_{X_u}du = \int_{]t,T]}\Tr(Cd\langle M, M\rangle_uC^*).\]
Note that for any adapted process $C$, the quantity $\|C\|^2_X = \{\|C_u\|^2_{X_u}, u\in[0,T]\}$ is an adapted random process with values in $[0,\infty[$. Hence its expectation is well defined for each $u$,
\[E\|C_u\|^2_{X_u} = E[\Tr(C_u[\diag(A_uX_u)-\diag(X_u)A_u^*-A_u\diag(X_u)]C_u^*)].\]

We restrict our attention to when 
 \[E\|Z_u\|^2<+\infty\]  for all $u$ and \[E\|Y_u\|_{X_u}^2<+\infty\] for almost all $u$ (i.e. $E\|Y_u\|<+\infty$ $d\langle M, M\rangle_u$-a.s.).
This, coupled with the Lipschitz conditions placed on $F$ and $G$ will immediately imply
\[E\int_{]0,T]}\|F(\omega, u, Z_u, Y_u)\|^2du <+\infty\] and \[E\int_{]0,T]}\|G(\omega, u, Z_u)\|^2du <+\infty.\]
 This assumption has proven to be important when dealing with similar equations based on Brownian motion; see for example \cite{El1997}.

Note that, as it is only the $u$-left limit $G(\omega, u-, Z_{u-})$ which enters into (\ref{eq:GeneralBSDE}), there is no loss of generality to assume that $G(\omega, u, Z)$ is left continuous in $u$ for each $\omega$ and $Z$. Note also that as $M$ is a semimartingale, $Z$ is c\`adl\`ag and adapted (see \cite[Thm 4.31]{Jacod2003}).

We assume the existence of the left limits of $Y$. Hence, $Y$ must have at most a countable number of discontinuities, and therefore it must be left-continuous at each $t$ except possibly on a $dt$-null set. Hence, if $Y_u$ satisfies (\ref{eq:GeneralBSDE}), then so does $Y_{u-}$,
\[Z_t+ \int_{]t,T]} F(u, Z_u, Y_{u-})du+\int_{]t,T]}[G(u-,Z_{u-})+Y_{u-}] dM_t = Q.\]
Writing $Y^*_t := Y_{t-}$ we have a left-continuous process $Y^*$ which will also satisfy the desired equation, and therefore the writing of the left limits $Y_{t-}$ is unnecessary (as we simply assume our solution is left-continuous).

Given these arguments, we rewrite (\ref{eq:GeneralBSDE}) as
\[Z_t+ \int_{]t,T]} F(u, Z_u, Y_u)du+\int_{]t,T]}[G(u,Z_{u-})+Y_{u}] dM_t = Q,\]

\section{A Martingale Representation Theorem}
\begin{lemma} \label{lem:MartRep}
Any $\mathbb{R}^K$ valued martingale $L$ defined on $(\Omega, \{\mathcal{F}_t\}, \mathbb{P})$ can be represented as a stochastic (in this case Stieltjes) integral with respect to the martingale process $M$, up to equality $\mathbb{P}$-a.s.. This representation is unique up to a $d\langle M,M\rangle_t\times \mathbb{P}$-null set.
\end{lemma}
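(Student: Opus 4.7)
I would reduce to the scalar case $K=1$ by working componentwise, so that $Z$ is an $\mathbb{R}^{1\times N}$ row-vector process. The proof then follows the classical two-step strategy: construct the representing integrand explicitly for a dense class of terminal values, and extend by the It\^o isometry $E|\int_{]0,T]} Z_u\, dM_u|^2 = E\int_{]0,T]}\|Z_u\|^2_{X_u}\,du$, which is immediate from the identification $\int \|Z_u\|^2_{X_u}\,du = \int \Tr(Z_u\, d\langle M,M\rangle_u Z_u^*)$ already recorded in Section~2.

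\textbf{Step 1: cylindrical random variables.} For a dense class, take $Q = f(X_{t_1}, \ldots, X_{t_n})$ on a finite grid $0 = t_0 < t_1 < \cdots < t_n = T$; these are $L^2$-dense in $\mathcal{F}_T$ by a monotone-class argument, since $\mathcal{F}_T$ is the completion of the $\sigma$-field generated by the finite-dimensional marginals of $X$. For such $Q$, the Markov property of $X$ together with $X_u \in \{e_1, \ldots, e_N\}$ forces, on each $(t_{k-1}, t_k]$, the martingale $L_u = E[Q \mid \mathcal{F}_u]$ to take the form $L_u = \phi_k(u)^* X_u$, where $\phi_k:[t_{k-1}, t_k] \to \mathbb{R}^N$ is $C^1$ with $\mathcal{F}_{t_{k-1}}$-measurable coefficients (depending on the history $(X_{t_1}, \ldots, X_{t_{k-1}})$ and on the transition kernel of $X$). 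Applying the product rule to $\phi_k(u)^* X_u$ and substituting (\ref{eq:MarkovChainRep}) gives
\[ dL_u = \bigl(\phi_k'(u) + A_u^*\phi_k(u)\bigr)^* X_u \, du + \phi_k(u)^*\, dM_u, \]
so the martingale property of $L$ forces the finite-variation term to vanish (this is the backward Kolmogorov equation for $\phi_k$) and yields $L_t - L_0 = \int_{]0,t]} Z_u\, dM_u$ with $Z_u = \phi_k(u)^*$ on $(t_{k-1}, t_k]$.

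\textbf{Step 2: density, uniqueness, and the main obstacle.} For general $Q \in L^2(\mathcal{F}_T)$, choose cylindrical $Q^{(n)}\to Q$ in $L^2$; by the It\^o isometry the associated integrands $Z^{(n)}$ are Cauchy in the Hilbert space of predictable processes with norm $(E\int \|\cdot\|^2_{X_u}\, du)^{1/2}$, and passing to the $L^2$-limit in $\int Z^{(n)}\, dM = L^{(n)} - L^{(n)}_0$ delivers the required $Z$. Uniqueness follows from the same isometry: if $Z$ and $\tilde Z$ both represent $L$, then $\int_{]0,T]}(Z - \tilde Z)\,dM = 0$, whence $E\int_{]0,T]} \|Z - \tilde Z\|^2_{X_u}\,du = 0$, i.e.\ $Z = \tilde Z$ up to a $d\langle M,M\rangle_t\times\mathbb{P}$-null set. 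The one subtlety to watch is that $\|\cdot\|_{X_u}$ is only a semi-norm (for example, it always annihilates the all-ones vector, and more generally any vector constant on the states reachable from $X_u$), so the Hilbert space in which we close up must be interpreted as the quotient by this null subspace---this is precisely the content of the ``up to a $d\langle M,M\rangle_t$-null set'' qualifier in the statement, and is what the proof must be arranged to respect.
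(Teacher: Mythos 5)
Your argument is correct, but it takes a genuinely different route from the paper. The paper does not use the Markov property of $X$ at all: it passes to the jump-counting processes $N^{ij}_t=\int_{]0,t]}\langle X_{u-},e_i\rangle\langle dX_u,e_j\rangle$ and their compensated martingales $Q^{ij}$, observes that these generate the same filtration as $X$, and then invokes Br\'emaud's representation theorem for multivariate point processes to write any scalar square-integrable martingale as $L_0+\sum_{i\neq j}\int\gamma^{ij}_u\,dQ^{ij}_u$; repackaging the $\gamma^{ij}$ into a matrix $\Gamma_u$ gives the integrand in the explicit form $X_{u-}^*\Gamma_u$, and the vector case is handled componentwise exactly as you do. Uniqueness in the paper is proved by applying the product rule to $\Phi_t\Phi_t^*$ where $\Phi=\int(Y^1-Y^2)\,dM$, which is the same isometry computation you invoke. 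Your construction buys self-containedness (no appeal to the point-process literature) and an explicit integrand $\phi_k(u)^*$ solving the backward Kolmogorov equation on each grid interval, which is illuminating; the cost is that it genuinely uses the (time-inhomogeneous) Markov property and some regularity of $u\mapsto A_u$ to get $\phi_k$ absolutely continuous, whereas the Br\'emaud route works for any filtration generated by a finite-state jump process and delivers the structural form $Y_u=X_{u-}^*\Gamma_u$ directly. Two small points to tighten in your write-up: ``$C^1$'' should be weakened to absolutely continuous unless you assume $A_u$ continuous in $u$, and you should note that the predictable finite-variation part $\int(\phi_k'(u)+A_u^*\phi_k(u))^*X_u\,du$ vanishes by uniqueness of the special semimartingale decomposition (the integrand itself need only vanish in the components of reachable states). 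Your closing remark about $\|\cdot\|_{X_u}$ being a semi-norm, and the quotient interpretation of the uniqueness claim, is accurate and is implicit in the paper's ``$d\langle M,M\rangle_t\times\mathbb{P}$-null set'' formulation.
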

\begin{proof}
For $i\neq j$, $\langle X_{u-}, e_i\rangle\langle X_u, e_j\rangle=1$ if and only if $X$ jumps from $e_i$ to $e_j$ at time $u$. Then
\[\langle X_{u-}, e_i\rangle\langle X_u, e_j\rangle = \langle X_{u-}, e_i\rangle\langle \Delta X_u, e_j\rangle = \langle X_{u-}, e_i\rangle\langle dX_u, e_j\rangle\] 
and \[N_t^{ij}=\int_{]0,t]}\langle X_{u-}, e_i\rangle\langle dX_u, e_j\rangle\]
is the number of jumps from $e_i$ to $e_j$ in $]0,t]$.

From (\ref{eq:MarkovChainRep}), $dX_t = A_tX_tdt + dM_t$, and so
\[N_t^{ij} = \int_{]0,t]} A_{ji}\langle X_u, e_i\rangle du + Q_t^{ij},\]
where $Q_t^{ij}$ is the compensated jump martingale
\[Q_t^{ij}= \int_{]0,t]}\langle X_{u-}, e_i\rangle\langle dM_u, e_j\rangle.\]

Note that the filtration generated by $X$ is the same as the filtration generated by the processes $N^{ij}, i\neq j, 1\leq i, j \leq N.$ Therefore, for any $\mathbb{R}$-valued square integrable $\{\mathcal{F}_t\}$ martingale $L$, we can write
\[L_t = L_0 + \sum_{i\neq j} \int_{]0,t]} \gamma^{ij}_u dQ^{ij}_u\]
for some predictable processes $\gamma^{ij}_u$. (See \cite{Bremaud1981} for a proof of this.) Define the predictable matrix process $\Gamma$ by 
\[[\Gamma_u]_{ij} = \left\{ \begin{array}{ll} 0 & i=j,\\ \gamma^{ij}_u & i\neq j.\end{array} \right.\]
We can then write 
\[\begin{split}
L_t &= L_0 + \sum_{i\neq j} \int_{]0,t]} \gamma^{ij}_u dQ^{ij}_u\\
&= L_0 + \int_{]0,t]}\sum_{i, j}  \gamma^{ij}_u \langle X_{u-}, e_i\rangle\langle dM_u, e_j\rangle\\
&= L_0 + \int_{]0,t]}X_{u-}^*\Gamma_u dM_u.
\end{split}\]

If we now consider an $\mathbb{R}^K$ valued  $\mathcal{F}_t$ martingale $L$, we can write this as \[(L^1, L^2, ..., L^K)^*,\] where each term is an $\mathbb{R}$-valued martingale. Hence we can write the $L^i$ term as a stochastic integral for some $\Gamma^i$, and so the vector martingale can be written
\begin{equation}\label{eq:MartingaleRepnThm}
L_t = L_0 + \int_{]0,t]}Y_udM_u
\end{equation}
where 
\[Y_u=\left[\begin{array}{c}X_{u-}^*\Gamma^1_u \\X_{u-}^*\Gamma^2_u \\\vdots\\X_{u-}^*\Gamma^K_u \end{array}\right]\]
is a predictable $\mathbb{R}^{K\times N}$ valued matrix process.

Furthermore, this decomposition is unique, in the sense that if 
\[L_t = L_0 + \int_{]0,t]}Y_u^1dM_u = L_0 + \int_{]0,t]}Y_u^2dM_u\]
then
\[0 = \int_{]0,t]}[Y_u^1-Y^2_u]dM_u =: \Phi_t\]
$\mathbb{P}$-a.s. By the Stieltjes product rule,
\[\begin{split}
0 &= \Phi_t \Phi_t^*\\
&= \Phi_0\Phi_0^* + \int_{]0,t]}\Phi_u dM_u [Y^1_u-Y^2_u]^* + \int_{]0,t]}[Y^1_u-Y^2_u]dM_u \Phi_u^* \\&\qquad+ \sum_{0<u\leq t} [Y_u^1-Y^2_u]\Delta M_u \Delta M_u^*[Y^1_u-Y^2_u]^*\\
&= L_t + \int_{]0,t]}[Y_u^1-Y^2_u] d[M,M]_u [Y^1_u-Y^2_u]^*
\end{split}\]
for some martingale $L$. This in turn implies that \[\int_{]0,t]}[Y_u^1-Y^2_u] d\langle M,M \rangle_u [Y^1_u-Y^2_u]^*=0\] $\mathbb{P}$-a.s., and so $Y^1_u = Y^2_u$ $d\langle M,M\rangle_u\times \mathbb{P}$-a.s.
\end{proof}

\begin{lemma} 
For a square-integrable martingale $L$, the process $Y$ satisfies the square integrability condition $E\|Y_t\|_{X_u}^2<+\infty$ $dt$-a.s.
\end{lemma}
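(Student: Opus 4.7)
The plan is to reduce the claim to Tonelli's theorem applied to the Itô-type isometry associated with the integral $\int Y_u\,dM_u$. From Lemma \ref{lem:MartRep} I have the representation $L_t = L_0 + \int_{]0,t]} Y_u\,dM_u$ with $Y$ predictable and $\mathbb{R}^{K\times N}$-valued, so the objective reduces to converting square-integrability of $L$ into an integrability statement about $\|Y_u\|^2_{X_u}$.

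First I would compute the predictable quadratic variation of the vector martingale $L - L_0$. Using the Stieltjes product rule on $(L_t - L_0)(L_t - L_0)^*$ in exactly the fashion already used at the end of Lemma \ref{lem:MartRep}, I obtain
\[
(L_t - L_0)(L_t - L_0)^* = N_t + \int_{]0,t]} Y_u\, d[M,M]_u\, Y_u^*
\]
for some matrix-valued local martingale $N$. Taking traces and compensating (replacing $[M,M]$ by $\langle M,M\rangle$), and using the identity already noted that $\Tr(Y_u\,d\langle M,M\rangle_u Y_u^*) = \|Y_u\|^2_{X_u}\,du$, I conclude that
\[
\|L_t - L_0\|^2 - \int_{]0,t]} \|Y_u\|^2_{X_u}\, du
\]
is a local martingale. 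A standard localization argument (stopping at times that keep the expectation of $\|L - L_0\|^2$ bounded, which is possible since $L$ is square-integrable and hence $\|L\|^2$ is a submartingale of class (D) on $[0,T]$) together with Fatou's lemma yields the isometry
\[
E\|L_T - L_0\|^2 = E\int_{]0,T]} \|Y_u\|^2_{X_u}\,du.
\]

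Since $L$ is square-integrable, the left side is finite. The integrand $\|Y_u\|^2_{X_u}$ is nonnegative, so Tonelli's theorem lets me interchange the expectation and the time integral, giving $\int_{]0,T]} E\|Y_u\|^2_{X_u}\,du < +\infty$. A finite integral of a nonnegative function is finite $dt$-almost everywhere, which is exactly the conclusion $E\|Y_t\|^2_{X_t} < +\infty$ for almost all $t$.

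The main obstacle is the first step, namely the rigorous Itô isometry in this matrix/chain setting: the integrand $Y$ is only known to be predictable and $\{Y_u\}$ has not a priori been shown to be locally square-integrable against $d\langle M,M\rangle$. I would handle this by defining the stopping times $\tau_n = \inf\{t : \int_{]0,t]}\|Y_u\|^2_{X_u}\,du \geq n\}\wedge T$, applying the isometry on $[0,\tau_n]$ where the stochastic integral is a genuine $L^2$-martingale, then letting $n \to \infty$ via monotone convergence on the right-hand side and optional stopping (using that $L$ is a true square-integrable martingale so $E\|L_{\tau_n} - L_0\|^2 \leq E\|L_T - L_0\|^2 < \infty$) on the left. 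Everything else is bookkeeping.
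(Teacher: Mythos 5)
Your proposal is correct and follows essentially the same route as the paper: expand the square of the martingale via the product rule, identify the jump/quadratic-variation term with $\int_{]0,t]}\|Y_u\|^2_{X_u}\,du$ after compensation, take expectations to get the isometry $E\|L_T-L_0\|^2 = E\int_{]0,T]}\|Y_u\|^2_{X_u}\,du$, and conclude by Tonelli. The only difference is that you spell out the localization/optional-stopping argument justifying the isometry, which the paper passes over silently.
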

\begin{proof}
From Theorem \ref{eq:MartingaleRepnThm}, we have that 
\[L_t = L_0 + \int_{]0,t]}Y_udM_u\]
and we also know that 
\[\sup_{t} E\|L_t\|^2 < +\infty.\]
We can express
\[\|L_t\|^2 = \|L_0\|^2 + 2 \int_{]0,t]}\langle L_u, Y_u dM_u\rangle +\sum_{0<u\leq t}\|Y_u \Delta M_u\|^2\]
and therefore
\[\begin{split}
E\|L_t\|^2 &= E\|L_0\|^2 +E\sum_{0<u\leq t}\|Y_u \Delta M_u\|^2\\
&=E\|L_0\|^2 +E\int_{]0,t]}\|Y_u\|^2_{X_u} du.
\end{split}\]

Hence $E\int_{]0,t]}\|Y_u\|^2_{X_u} du<+\infty$, which implies that $E\|Y_u\|^2_{X_u}<+\infty$ $dt$-a.s. as desired.
\end{proof}

\section{A Simple Case}
\begin{lemma}\label{lem:BSDE1Soln}
Consider a simplified version of (\ref{eq:GeneralBSDE}), namely
\begin{equation}\label{eq:BSDE1}
Z_t+ \int_{]t,T]} F(\omega, u)du+\int_{]t,T]}[G(\omega, u)+Y_{u}] dM_t = Q.
\end{equation}
This equation has a unique solution.
\end{lemma}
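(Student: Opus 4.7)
The plan is to reduce (\ref{eq:BSDE1}) to an application of Lemma \ref{lem:MartRep}. Since $F$ and $G$ do not depend on the unknowns, the conditional expectation of the terminal data yields a martingale whose representation via $dM$ reads off the components $Z$ and $Y$ directly.

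Concretely, I would define the process
\[L_t := E\Bigl[Q - \int_{]0,T]} F(\omega, u)\,du \,\Big|\, \mathcal{F}_t\Bigr].\]
This is a square-integrable $\mathbb{R}^K$-valued martingale, since $Q\in L^2(\mathcal{F}_T)$ and $E\int_{]0,T]} \|F(\omega,u)\|^2 du < \infty$ by the standing integrability assumption. Applying Lemma \ref{lem:MartRep} yields a predictable $\mathbb{R}^{K\times N}$-valued process $Y'$, unique up to a $d\langle M,M\rangle\times\mathbb{P}$-null set, such that $L_t = L_0 + \int_{]0,t]} Y'_u\,dM_u$. Then set
\[Z_t := L_t + \int_{]0,t]} F(\omega, u)\,du, \qquad Y_u := Y'_u - G(\omega, u).\]
One checks immediately that $Z_T = L_T + \int_{]0,T]} F\,du = Q$, and substituting the definitions into the left-hand side of (\ref{eq:BSDE1}) gives
\[Z_t + \int_{]t,T]} F\,du + \int_{]t,T]}[G+Y]\,dM = L_t + \int_{]0,T]} F\,du + \int_{]t,T]} Y'\,dM = L_T + \int_{]0,T]} F\,du = Q,\]
using $L_T - L_t = \int_{]t,T]} Y'\,dM$. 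This establishes existence; note also that $Z$ is automatically c\`adl\`ag and adapted, and $Y$ is predictable with $E\|Y_u\|_{X_u}^2 < \infty$ $dt$-a.s. via the preceding lemma applied to $L$.

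For uniqueness, suppose $(Z^1,Y^1)$ and $(Z^2,Y^2)$ both solve (\ref{eq:BSDE1}). Subtracting, the differences satisfy $(Z^1_t - Z^2_t) + \int_{]t,T]}(Y^1_u - Y^2_u)\,dM_u = 0$ for every $t$. The stochastic integral is a true martingale by the square-integrability assumption on $Y^1,Y^2$, so taking conditional expectations forces $Z^1_t = Z^2_t$ almost surely. Consequently $\int_{]t,T]}(Y^1 - Y^2)\,dM = 0$ a.s.\ for each $t$, and the uniqueness statement in Lemma \ref{lem:MartRep} yields $Y^1 = Y^2$ up to a $d\langle M,M\rangle\times\mathbb{P}$-null set.

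The main technical point requiring care is the justification that the relevant stochastic integrals are genuine martingales rather than merely local martingales, so that the conditional expectation step is valid; this is exactly what the standing $L^2$-assumptions on $Q$, $F$, $G$, and $Y$ provide. Beyond that, the argument is essentially algebraic, and the heart of the lemma is really just invoking the martingale representation theorem established above.
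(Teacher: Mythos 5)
Your proposal is correct and follows essentially the same route as the paper: your $L_t$ is exactly the martingale $Z_t-\int_{]0,t]}F(u)\,du$ that the paper extracts, the shift $Y_u=Y'_u-G(u)$ is identical, and the uniqueness argument via conditional expectation plus the uniqueness clause of Lemma \ref{lem:MartRep} matches the paper's. No substantive differences.
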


\begin{proof}
First, let 
\[Z_t = E[Q-\int_{]t,T]}F(u)du|\mathcal{F}_t]\]
then
\[Z_t -\int_{]0,t]}F(u)du= E[Q-\int_{]0,T]}F(u)du|\mathcal{F}_t]\]
is a square-integrable martingale (note the integrability assumptions on $Q$ and $F$ above), and so by Lemma \ref{lem:MartRep} has a representation
\[Z_t -\int_{]0,t]}F(u)du= \int_{]0,t]}\Gamma_u dM_u\]
for some square-integrable predictable matrix process $\Gamma_u$.
Write $Y_u:=\Gamma_u-G(u)$, so
\[Z_t = \int_{]0,t]}F(u)du+\int_{]0,t]}[G(u)+Y_u] dM_u.\]
By construction, $Z_T = Q$, and a simple substitution argument gives
\[Q =Z_t+ \int_{]t,T]} F(u)du+\int_{]t,T]}[G(u)+Y_{u}] dM_t \]
as desired.

Furthermore, suppose $(Z_t^1, Y_t^1)$ and $(Z_t^2, Y_t^2)$ both solve (\ref{eq:BSDE1}). Then for all $t$
\[Z_t^1-Z_t^2+\int_{]t,T]}[Y_{u}^1-Y_{u}^2] dM_t=0.\]
Taking an $\mathcal{F}_t$ conditional expectation shows that $Z_t^1 = Z_t^2$ $\mathbb{P}$-a.s for each $t$. As $Z^1$ and $Z^2$ are c\`adl\`ag this implies that they are indistinguishable \cite[Lemma 2.21]{Elliott1982}. Uniqueness from the martingale representation above then implies that $Y^1 = Y^2$ $d\langle M,M\rangle_t\times \mathbb{P}$-a.s.
\end{proof}

\section{Increasing complexity} \label{sec:Step2}
We shall now reintroduce the variable $Y_u$ throughout our version of (\ref{eq:GeneralBSDE}), giving
\begin{equation}\label{eq:BSDE2}
Z_t+ \int_{]t,T]} F(u, Y_u)du+\int_{]t,T]}[G(u)+Y_{u}] dM_t = Q.
\end{equation}

Assume that $F$ is Lipschitz continuous in the following way. There exists $c\in \mathbb{R}$ such that for all $u\in[0,T]$
\[\|F(u, Y^1_u)-F(u, Y^2_u)\|^2\leq c^2 \min_i \|Y^1_u-Y^2_u\|^2_{e_i}.\]
Note that this immediately implies
\[E\|F(u, Y^1_u)-F(u, Y^2_u)\|^2\leq c^2 E\|Y^1_u-Y^2_u\|^2_{X_u}.\]

\begin{lemma} \label{lem:BSDE2Soln2}
Under the above Lipschitz condition, (\ref{eq:BSDE2}) has at most one solution, up to indistinguishability for $Z$ and equality a.s. $d\langle M,M \rangle_t \times \mathbb{P}$ for $Y$.
\end{lemma}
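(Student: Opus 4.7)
The plan is to study the difference pair $(\bar Z, \bar Y) := (Z^1 - Z^2, Y^1 - Y^2)$ and show it must vanish. Subtracting the two instances of (\ref{eq:BSDE2}) and noting that both terminal values equal $Q$, I obtain the forward dynamics
\[d\bar Z_u = [F(u, Y^1_u) - F(u, Y^2_u)]\, du + \bar Y_u\, dM_u, \qquad \bar Z_T = 0.\]
The natural energy quantity to track is $\|\bar Z_t\|^2$, which I would expand via the Stieltjes/It\^o product rule on $]t,T]$, exactly as was done for $\Phi_t\Phi_t^*$ in the proof of Lemma \ref{lem:MartRep}. The quadratic variation picks up only the jump contribution $\Tr(\bar Y_u\, d[M,M]_u\, \bar Y_u^*)$, since the $du$-drift is continuous of finite variation.

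Taking expectations next, the term $\int \langle \bar Z_{u-}, \bar Y_u\, dM_u\rangle$ drops out (see the obstacle below), and we may replace $d[M,M]_u$ by $d\langle M,M\rangle_u$, yielding the identity
\[E\|\bar Z_t\|^2 + E\int_{]t,T]} \|\bar Y_u\|^2_{X_u}\, du = -2\, E\int_{]t,T]} \langle \bar Z_u,\, F(u, Y^1_u) - F(u, Y^2_u)\rangle\, du.\]
To exploit the Lipschitz hypothesis, I observe that since $X_u$ is always a basis vector, $\|\bar Y_u\|^2_{X_u} \geq \min_i \|\bar Y_u\|^2_{e_i}$, and hence $\|F(u, Y^1_u) - F(u, Y^2_u)\|^2 \leq c^2 \|\bar Y_u\|^2_{X_u}$ pointwise. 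Combining this with the elementary bound $2|\langle a, b\rangle| \leq \alpha\|a\|^2 + \alpha^{-1}\|b\|^2$ applied with $\alpha = 2c^2$ absorbs the $\bar Y$-term into the left-hand side, leaving
\[E\|\bar Z_t\|^2 + \tfrac{1}{2}\, E\int_{]t,T]} \|\bar Y_u\|^2_{X_u}\, du \leq 2c^2 \int_t^T E\|\bar Z_u\|^2\, du.\]

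A backward Gr\"onwall argument on the nonnegative function $t \mapsto E\|\bar Z_t\|^2$ then forces it to vanish identically on $[0,T]$. Substituting this back into the previous inequality gives $E\int_{]0,T]} \|\bar Y_u\|^2_{X_u}\, du = 0$, hence $\bar Y = 0$ $d\langle M,M\rangle_u \times \mathbb{P}$-a.s. Since $\bar Z_t = 0$ $\mathbb{P}$-a.s.\ for each $t$ and both $Z^1, Z^2$ are c\`adl\`ag, the same argument cited in Lemma \ref{lem:BSDE1Soln} upgrades this to indistinguishability.

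The main obstacle I anticipate is technical rather than conceptual: verifying that $\int_{]0,\cdot]} \langle \bar Z_{u-}, \bar Y_u\, dM_u\rangle$ is a true martingale, not merely a local one, so that its expectation vanishes. The standing hypotheses $E\|Z_u\|^2 < \infty$ and $E\|Y_u\|^2_{X_u} < \infty$ (for a.e.\ $u$) together with the Lipschitz control on $F$ should give the required uniform $L^2$ bound on the bracket of this integral, after a routine localisation via stopping times $\tau_n \uparrow T$, but this is where the integrability assumptions from Section~2 have to be used carefully.
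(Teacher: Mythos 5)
Your argument is correct and follows essentially the same route as the paper: the same energy identity for $E\|Z^1_t-Z^2_t\|^2$ obtained from the product rule, the same use of Young's inequality with a weight chosen against the Lipschitz constant to absorb the $\|Y^1-Y^2\|^2_{X_u}$ term, and the same two-step conclusion (first $Z$, then $Y$). The only cosmetic difference is that you close with a backward Gr\"onwall argument where the paper uses an integrating factor with the exact choice $x=c^{-1}$ (which kills the $Y$-coefficient outright); these are interchangeable.
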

\begin{proof}
Suppose $(Z^1_t, Y^1_t)$ and $(Z^2_t, Y^2_t)$ are both solutions to (\ref{eq:BSDE2}). Then  
\[Z^1_t-Z^2_t=Z^1_0-Z^2_0+ \int_{]0,t]} [F(u, Y^1_u)-F(u, Y^2_u)]du+\int_{]0,t]}[Y^1_{u}-Y^2_{u}] dM_t.\]

Using the Stieltjes Chain rule for products,
\[\begin{split}
\|Z_t^1-Z_t^2\|^2 &= \|Z_0^1-Z_0^2\|^2 + 2\int_{]0,t]}\langle Z_u^1-Z_u^2, F(u, Y^1_u)-F(u, Y^2_u)\rangle du\\
&\qquad+2\int_{]0,t]}\langle Z_u^1-Z_u^2, [Y^1_u-Y^2_u]dM_u\rangle +\sum_{0<u\leq t} \|\Delta Z_t^1-\Delta Z_t^2\|^2,
\end{split}\]
and hence, taking expectations and evaluating at $t=T$,
\[\begin{split}
&E\|Z_t^1-Z_t^2\|^2\\ &= - 2\int_{]t,T]}E\langle Z_u^1-Z_u^2, F(u, Y^1_u)-F(u, Y^2_u)\rangle du-E\sum_{t<u\leq T} \|\Delta Z_t^1-\Delta Z_t^2\|^2\\
&= - 2\int_{]t,T]}E\langle Z_u^1-Z_u^2, F(u, Y^1_u)-F(u, Y^2_u)\rangle du-E\sum_{t<u\leq T} \|[Y^1_u-Y^2_u]\Delta M_u\|^2\\
&= - 2\int_{]t,T]}E\langle Z_u^1-Z_u^2, F(u, Y^1_u)-F(u, Y^2_u)\rangle du-\int_{]t,T]}E\|Y^1_u-Y^2_u\|^2_{X_u}du\\
\end{split}\]

We now recall that, for any $x\in \mathbb{R}$ and $\mathbb{R}^N$ inner product $\langle ,\rangle$, 
\[\pm 2\langle a,b\rangle \leq \frac{1}{x^2}\|a\|^2+x^2\|b\|^2\]
(simply expand $0\leq \|\frac{a}{x} \mp xb\|^2$). Using this,
\[\begin{split}
&E\|Z_t^1-Z_t^2\|^2\\ &= \int_{]t,T]}\left[- 2E\langle Z_u^1-Z_u^2, F(u, Y^1_u)-F(u, Y^2_u)\rangle -E\|Y^1_u-Y^2_u\|^2_{X_u}\right]du\\
&\leq \int_{]t,T]}\left[\frac{1}{x^2}E\|Z_u^1-Z_u^2\|^2 +x^2E\|F(u, Y^1_u)-F(u, Y^2_u)\|^2 -E\|Y^1_u-Y^2_u\|^2_{X_u}\right]du\\
&\leq \int_{]t,T]}\left[\frac{1}{x^2}E\|Z_u^1-Z_u^2\|^2 +(x^2c^2-1)E\|Y^1_u-Y^2_u\|^2_{X_u}\right]du\\
\end{split}\]
and hence, through the use of an integrating factor,
\[e^{t/x^2}E\|Z_t^1-Z_t^2\|^2\leq (x^2c^2-1)\int_{]t,T]}e^{u/x^2}E\|Y^1_u-Y^2_u\|^2_{X_u}du.\]

If we now set $x=c^{-1}$ we see that $E\|Z_t^1-Z_t^2\|^2=0$, indicating that $Z^1_t=Z^2_t$ $\mathbb{P}$-a.s. for each $t$. Right continuity then again implies that $Z^1$ and $Z^2$ are indistinguishable.

If we set $x<c^{-1}$, we conclude
\[0\geq\int_{]t,T]}E\|Y^1_u-Y^2_u\|^2_{X_u}du = E\int_{]t,T]}\Tr([Y^1_u-Y^2_u]d\langle M,M\rangle _u[Y^1_u-Y^2_u]^*)\]
and so $Y^1= Y^2$ $d\langle M,M\rangle _u\times \mathbb{P}$-a.s.

Therefore, we claim that the solutions $(Z^1, Y^1)$ and $(Z^2, Y^2)$ are equivalent, and consequently that any solution is unique (up to appropriate sets of measure zero).
\end{proof}

\begin{lemma} \label{lem:BSDE2Soln1}
Under the above Lipschitz condition, (\ref{eq:BSDE2}) has a solution.
\end{lemma}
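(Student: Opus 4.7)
The plan is a Picard iteration built on Lemma~\ref{lem:BSDE1Soln}. Starting from $Y^0 \equiv 0$, I would recursively define $(Z^{n+1}, Y^{n+1})$ as the unique solution (given by Lemma~\ref{lem:BSDE1Soln}) of
\[Z^{n+1}_t + \int_{]t,T]} F(u, Y^n_u)\, du + \int_{]t,T]}\bigl[G(u) + Y^{n+1}_u\bigr]\, dM_u = Q,\]
treating $u \mapsto F(u, Y^n_u)$ as the prescribed driver. The Lipschitz hypothesis, together with the standing square-integrability of $Y^n$, ensures this driver lies in $L^2(dt\times \mathbb{P})$, so Lemma~\ref{lem:BSDE1Soln} applies at every stage and yields a c\`adl\`ag adapted $Z^{n+1}$ and a predictable $Y^{n+1}$ with the usual integrability.

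The crux is to show the iteration contracts in an appropriate weighted norm. Setting $\delta^n Z := Z^{n+1} - Z^n$ and $\delta^n Y := Y^{n+1} - Y^n$, the differences satisfy
\[\delta^n Z_t + \int_{]t,T]}\bigl[F(u, Y^n_u) - F(u, Y^{n-1}_u)\bigr]\, du + \int_{]t,T]}\delta^n Y_u\, dM_u = 0,\]
with $\delta^n Z_T = 0$. I would mimic the calculation from the uniqueness proof of Lemma~\ref{lem:BSDE2Soln2}, but applied to $e^{\beta u}\|\delta^n Z_u\|^2$ for a parameter $\beta > 0$ to be fixed. The Stieltjes product rule, taking expectations, the inequality $|2\langle a, b\rangle| \leq \beta\|a\|^2 + \beta^{-1}\|b\|^2$, and the Lipschitz bound on $F$ together should deliver
\[e^{\beta t} E\|\delta^n Z_t\|^2 + \int_{]t,T]} e^{\beta u} E\|\delta^n Y_u\|^2_{X_u}\, du \leq \frac{c^2}{\beta} \int_{]t,T]} e^{\beta u} E\|\delta^{n-1} Y_u\|^2_{X_u}\, du.\]
Choosing $\beta > c^2$ renders the map a strict contraction (with ratio $c^2/\beta$) in the Banach space of predictable processes equipped with $\|Y\|^2_\beta := E\int_{]0,T]} e^{\beta u}\|Y_u\|^2_{X_u}\, du$, while the same estimate at general $t$ forces $\{Z^n_t\}$ to be uniformly Cauchy in $L^2(\Omega)$.

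It would then remain to pass to the limit in the Picard equation to conclude that the limits $(Z, Y)$ solve (\ref{eq:BSDE2}). Convergence of the Lebesgue integral follows from $F(\cdot, Y^n) \to F(\cdot, Y)$ in $L^2(dt\times\mathbb{P})$ via the Lipschitz hypothesis, and convergence of the stochastic integral follows from the isometry $E\|\int Y\, dM\|^2 = E\int \|Y\|^2_{X_u}\, du$ already used in the proof of Lemma~\ref{lem:MartRep}. The expected main obstacle is obtaining a contraction valid on the whole interval $[0,T]$ without any smallness assumption on $cT$; the exponential weight $e^{\beta u}$ absorbs the Lipschitz constant and resolves this cleanly, in direct analogy with the Brownian BSDE literature.
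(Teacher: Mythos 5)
Your proposal is correct and follows essentially the same route as the paper: a Picard iteration solved at each stage via Lemma~\ref{lem:BSDE1Soln}, the same energy identity for $E\|Z^{n+1}_t-Z^n_t\|^2$, and the same integrating-factor/exponential-weight trick (the paper's $e^{t/x^2}$ with $x^2c^2\leq 1/2$ is your $e^{\beta t}$ with $\beta\geq 2c^2$) to obtain a geometric contraction in the weighted norm on $Y$ and hence Cauchy sequences for both $Y^n$ and $Z^n$. Your closing remarks on passing to the limit via the isometry are a slightly more explicit version of what the paper leaves implicit.
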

\begin{proof}
We now wish to demonstrate that for an arbitrary terminal condition $Q$, a solution $(Z_t, Y_t)$ exists. We do so using a Picard-type iteration, where we define recursively $(Z^{n+1}_t, Y^{n+1}_t)$ to be the solution to
\[Z^{n+1}_t + \int_{]t,T]} F(u, Y^n_u)du + \int_{]t,T]}[G(u)+Y^{n+1}_{u}] dM_u = Q.\]
This equation is of the type of (\ref{eq:BSDE1}), and so the existence of a unique solution is guaranteed by Lemma \ref{lem:BSDE1Soln}. We shall show that these iterates form a Cauchy sequence under an appropriate norm, and therefore that their limit exists and solves the desired equation.

As earlier, we can show that 
\[\begin{split}
E\|Z_t^{n+1}-Z_t^n\|^2 &= -2\int_{]t,T]} E\langle Z^{n+1}_u - Z^{n}_u, F(u, Y^n_u)- F(u, Y^{n-1}_u\rangle du \\&\qquad- \int_{]t,T]} E\|Y^{n+1}_u-Y^{n}_u\|^2_{X_u} du\end{split}\]
and so for any $x\in\mathbb{R}$
\begin{equation}\label{eq:P2Int}
\begin{split}0&\leq e^{t/x^2}\|Z_t^{n+1}-Z_t^n\|^2 \\&\leq \int_{]t,T]} e^{t/x^2}c^2 x^2E\|Y_u^n-Y_u^{n-1}\|^2_{X_u} du -\int_{]t,T]} e^{t/x^2}E\|Y_u^{n+1}-Y_u^{n}\|^2_{X_u} du\end{split}
\end{equation}
Setting $x\leq 2^{-1/2}c^{-1}$ we have
\[\begin{split}
\int_{]t,T]} e^{t/x^2}E\|Y_u^{n+1}-Y_u^{n}\|^2_{X_u} du 
&\leq \frac{1}{2}\int_{]t,T]} e^{2tc^2}E\|Y_u^n-Y_u^{n-1}\|^2_{X_u} du\\
&\leq 2^{-n}\int_{]t,T]} e^{2tc^2}E\|Y_u^1-Y_u^0\|^2_{X_u} du
\end{split}\]
and so $Y^n_t$ is a Cauchy sequence under an appropriate norm. By completeness, this implies that a limit exists.  Considering again (\ref{eq:P2Int}), we also see that $Z^n_t$ is a Cauchy sequence, and therefore again a limit exists. Furthermore, it is seen that these limits satisfy (\ref{eq:BSDE2}).
\end{proof}

\section{A General Solution}
We now consider (\ref{eq:GeneralBSDE}) in full generality. We again shall assume Lipschitz continuity on the generators $F$ and $G$; in this case we shall require there to exist $c\in \mathbb{R}$ such that for all $u\in[0,T]$
\[\begin{split}
\|F(u,Z^1_u, Y^1_u)-F(u,Z^2_u, Y^2_u)\|^2&\leq c^2 \min_i \|Y^1_u-Y^2_u\|^2_{e_i}+c^2\|Z^1_u-Z^2_u\|^2\\
\max_i\|G(u,Z^1_{u-})-G(u, Z^2_{u-})\|^2_{e_i} &\leq c^2\|Z^1_{u-}-Z^2_{u-}\|^2.
\end{split}\]
Again, these requirements are only needed to establish the weaker condition
\[\begin{split}
E\|F(u,Z^1_u, Y^1_u)-F(u,Z^2_u, Y^2_u)\|^2&\leq c^2 E\|Y^1_u-Y^2_u\|^2_{X_u}+c^2E\|Z^1_u-Z^2_u\|^2\\
E\|G(u,Z^1_{u-})-G(u, Z^2_{u-})\|^2_{X_u} &\leq c^2E\|Z^1_{u-}-Z^2_{u-}\|^2.
\end{split}\]
We can further reduce the second of these to
\[E\|G(u,Z^1_{u-})-G(u, Z^2_{u-})\|^2 \leq 3ac^2E\|Z^1_{u-}-Z^2_{u-}\|^2\]
as $\|.\|^2_{V}\leq 3a\|.\|^2$, where $a=\max_{u\in[0,T]}\max_{i,j}|[A_u]_{ij}|$.

We also note that as $\Delta Z_u = Y_u\Delta M_u$ and $\Delta M_u=0$ $\mathbb{P}$-a.s. for all $u$, we know that 
\[E\|Z^1_{u-}-Z^2_{u-}\|^2 = E\|Z^1_{u}-Z^2_{u}\|^2.\]
Therefore, our assumption implies the condition
\[E\|G(u,Z^1_{u-})-G(u, Z^2_{u-})\|^2_{X_u} \leq c^2E\|Z^1_{u}-Z^2_{u}\|^2.\]

\begin{lemma} \label{lem:GeneralBSDESolnUnique}
Under the above Lipschitz condition, (\ref{eq:GeneralBSDE}) has at most one solution up to indistinguishability for $Z$ and equality a.s. $d\langle M,M \rangle_t \times \mathbb{P}$ for $Y$.
\end{lemma}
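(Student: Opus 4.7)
The plan is to emulate the proof of Lemma~\ref{lem:BSDE2Soln2}, with additional care to handle the $Z$-dependence of $G$, which now couples into both the drift and the jump-square term. Take two solutions $(Z^i, Y^i)$, $i=1,2$, and write $\Delta Z = Z^1 - Z^2$, $\Delta Y = Y^1 - Y^2$, $\Delta F = F(u, Z^1_u, Y^1_u) - F(u, Z^2_u, Y^2_u)$, and $\Delta G = G(u, Z^1_{u-}) - G(u, Z^2_{u-})$.

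First, I would apply the Stieltjes product rule to $\|\Delta Z_t\|^2$, take expectations to kill the $dM$-martingale part, and subtract the identities at $t$ and $T$ (using $\Delta Z_T = 0$) to eliminate the initial term. Since $\Delta Z$ has jumps $[\Delta G + \Delta Y]\Delta M_u$, this yields
\[
E\|\Delta Z_t\|^2 + \int_{]t,T]} E\|\Delta G + \Delta Y\|^2_{X_u}\,du = -2\int_{]t,T]} E\langle \Delta Z_u, \Delta F\rangle\,du,
\]
identical to the corresponding identity in Lemma~\ref{lem:BSDE2Soln2} apart from the appearance of $\Delta G$ inside the jump-square.

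Next, I would decouple the cross terms by two applications of the Cauchy--Schwarz trick: on the right the familiar $\pm 2\langle a,b\rangle \leq x^{-2}\|a\|^2 + x^2\|b\|^2$, and on the left its dual lower bound $\|a+b\|^2 \geq (1-\epsilon)\|a\|^2 + (1-\epsilon^{-1})\|b\|^2$, valid for $\epsilon\in(0,1)$, to separate $\Delta Y$ from $\Delta G$. Invoking the Lipschitz bounds $E\|\Delta F\|^2 \leq c^2 E\|\Delta Y\|^2_{X_u} + c^2 E\|\Delta Z_u\|^2$ and $E\|\Delta G\|^2_{X_u} \leq c^2 E\|\Delta Z_u\|^2$ (the latter using the observation $E\|\Delta Z_{u-}\|^2 = E\|\Delta Z_u\|^2$ from the preamble) produces
\[
E\|\Delta Z_t\|^2 + \bigl[1-\epsilon - x^2c^2\bigr]\int_{]t,T]} E\|\Delta Y\|^2_{X_u}\,du \leq K(\epsilon, x, c)\int_{]t,T]} E\|\Delta Z_u\|^2\,du.
\]
The one new subtlety compared to Lemma~\ref{lem:BSDE2Soln2} is the balancing act in choosing $\epsilon$ and $x$ (e.g.\ $\epsilon = 1/4$, $x^2 = 1/(4c^2)$) so that $1-\epsilon - x^2c^2 > 0$ and the $\Delta Y$-term on the left is nonnegative. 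Dropping it, a backward Gr\"onwall argument forces $E\|\Delta Z_t\|^2 \equiv 0$, and c\`adl\`ag regularity of the $Z^i$ then upgrades this to indistinguishability.

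Finally, once $Z^1$ and $Z^2$ are indistinguishable, Lipschitz continuity of $G$ forces $\Delta G \equiv 0$, and the BSDE for the difference collapses to $\int_{]t,T]} \Delta Y\,dM_u = 0$ for every $t$. The uniqueness clause of Lemma~\ref{lem:MartRep} then gives $Y^1 = Y^2$ a.s.\ $d\langle M,M\rangle_t\times\mathbb{P}$, completing the argument.
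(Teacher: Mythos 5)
Your proposal is correct and follows essentially the same route as the paper's proof: the same energy identity, the same two-parameter Cauchy--Schwarz splitting (your $\epsilon$ playing the role of the paper's $y^2$ in separating $\Delta G$ from $\Delta Y$ inside the jump-square term), and the same Gr\"onwall conclusion. The only cosmetic difference is the final step for $Y$: the paper reads $\int E\|\Delta Y\|^2_{X_u}\,du=0$ directly off the established inequality, whereas you appeal to the uniqueness clause of Lemma~\ref{lem:MartRep}; both are valid.
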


\begin{proof}

As above, suppose $(Z^1_t, Y^1_t)$ and $(Z^2_t, Y^2_t)$ are both solutions to (\ref{eq:GeneralBSDE}). 

Through the same calculations as before, we find that 
\[\begin{split}E\|Z_t^1-Z_t^2\|^2 &= - 2\int_{]0,t]}E\langle Z_u^1-Z_u^2, F(u,Z^1_u, Y^1_u)-F(u,Z^2_u, Y^2_u)\rangle du\\
&\qquad-\int_{]0,t]}E\|G(u, Z_{u-}^1)-G(u, Z_{u-}^2)+Y^1_u-Y^2_u\|^2_{X_u}du\end{split}\]
and so
\[\begin{split}E\|Z_t^1-Z_t^2\|^2 &=\int_{]t,T]}[- 2E\langle Z_u^1-Z_u^2, F(u,Z^1_u, Y^1_u)-F(u,Z^2_u, Y^2_u)\rangle\\
&\qquad-E\|G(u, Z_{u-}^1)-G(u, Z_{u-}^2)\|^2_{X_u}-E\|Y^1_u-Y^2_u\|^2_{X_u}\\
&\qquad-2E\langle G(u, Z_{u-}^1)-G(u, Z_{u-}^2),Y^1_u-Y^2_u\rangle_{X_u}]du\\
&\leq \int_{]t,T]}[\frac{1}{x^2}E\|Z_u^1-Z_u^2\|^2+x^2 \|F(u,Z^1_u, Y^1_u)-F(u,Z^2_u, Y^2_u)\|^2\\
&\qquad+(\frac{1}{y^2}-1)E\|G(u, Z_{u-}^1)-G(u, Z_{u-}^2)\|^2_{X_u}\\
&\qquad+(y^2-1)E\|Y^1_u-Y^2_u\|^2_{X_u}]du\\
\end{split}\]
for all $x,y\in \mathbb{R}$. Now let $x=(2c)^{-1}$ and $y=1/2$. In this case
\[\begin{split}E\|Z_t^1-Z_t^2\|^2 &\leq \int_{]t,T]}[4c^2E\|Z_u^1-Z_u^2\|^2+\frac{1}{4c^2} \|F(u,Z^1_u, Y^1_u)-F(u,Z^2_u, Y^2_u)\|^2\\
&\qquad+3E\|G(u, Z_{u-}^1)-G(u, Z_{u-}^2)\|^2_{X_u}-\frac{1}{2}E\|Y^1_u-Y^2_u\|^2_{X_u}]du\\
&\leq \int_{]t,T]}[4c^2E\|Z_u^1-Z_u^2\|^2+\frac{1}{4}E\|Y^1_u-Y^2_u\|^2_{X_u}+\frac{1}{4}E\|Z^1_u-Z^2_u\|^2\\
&\qquad+3c^2\|Z_u^1-Z_u^2\|^2-\frac{3}{4}E\|Y^1_u-Y^2_u\|^2_{X_u}]du.\end{split}\]
Hence
\[E\|Z_t^1-Z_t^2\|^2 \leq (7c^2 +\frac{1}{4})\int_{]t,T]}E\|Z_u^1-Z_u^2\|^2du\]
and so an application of Gr\"onwall's lemma implies 
\[E\|Z_t^1-Z_t^2\|^2=0,\]
i.e. $Z_t^1=Z_t^2$ $\mathbb{P}$-a.s. for each $t$. Right continuity again implies $Z^1$ and $Z^2$ are indistinguishable.

From the above we can also deduce
\[\int_{]t,T]}E\|Y^1_u-Y^2_u\|^2_{X_u}du\leq 2(7c^2 +\frac{1}{4})\int_{]t,T]}E\|Z_u^1-Z_u^2\|^2du=0\]
and therefore $Y^1=Y^2$ $d\langle M, M\rangle_t\times \mathbb{P}$-a.s. Hence the solution is once again unique.
\end{proof}

\begin{theorem} \label{thm:GeneralBSDESoln}
Under the above Lipschitz condition, (\ref{eq:GeneralBSDE}) has a solution. This solution is then unique up to indistinguishability for $Z$ and equality a.s. $d\langle M,M \rangle_t \times \mathbb{P}$ for $Y$.
\end{theorem}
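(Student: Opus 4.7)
Uniqueness is already handled by Lemma \ref{lem:GeneralBSDESolnUnique}, so my plan is to establish existence via a Picard iteration modelled on Lemma \ref{lem:BSDE2Soln1}, now freezing both $Z$ and $Y$ in the generators. Starting from $(Z^0, Y^0) \equiv (0,0)$, I would define $(Z^{n+1}, Y^{n+1})$ as the unique solution to
\[
Z^{n+1}_t + \int_{]t,T]} F(u, Z^n_u, Y^n_u)\,du + \int_{]t,T]}\bigl[G(u, Z^n_{u-}) + Y^{n+1}_u\bigr]\,dM_u = Q,
\]
whose existence is guaranteed by Lemma \ref{lem:BSDE1Soln}, once one verifies inductively that the frozen coefficients $F(\cdot, Z^n, Y^n)$ and $G(\cdot, Z^n_{-})$ meet the required square-integrability; this is immediate from the Lipschitz hypotheses together with the bounds already established on $(Z^n, Y^n)$.

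Next I would derive the key recursive estimate. Subtracting the defining equations at levels $n+1$ and $n$ and applying the Stieltjes product rule to $\|Z^{n+1}_t - Z^n_t\|^2$ exactly as in the proof of Lemma \ref{lem:GeneralBSDESolnUnique} yields, after taking expectations,
\[
\begin{split}
E\|Z^{n+1}_t - Z^n_t\|^2 &= -2\int_{]t,T]}E\langle Z^{n+1}_u - Z^n_u,\, F(u, Z^n_u, Y^n_u) - F(u, Z^{n-1}_u, Y^{n-1}_u)\rangle\,du \\
&\quad - \int_{]t,T]}E\bigl\|G(u, Z^n_{u-}) - G(u, Z^{n-1}_{u-}) + Y^{n+1}_u - Y^n_u\bigr\|^2_{X_u}\,du.
\end{split}
\]
Expanding the square in the last line, applying Young's inequality with parameters tuned as in Lemma \ref{lem:GeneralBSDESolnUnique} (e.g.\ $y^2 = 1/2$ on the $G$--$Y$ cross term), and invoking the Lipschitz bounds on $F$ and $G$, I would obtain an estimate of the form
\[
E\|Z^{n+1}_t - Z^n_t\|^2 + \tfrac{1}{2}\int_{]t,T]} E\|Y^{n+1}_u - Y^n_u\|^2_{X_u}\,du \leq K \int_{]t,T]} \bigl(E\|Z^{n+1}_u - Z^n_u\|^2 + E\|Z^n_u - Z^{n-1}_u\|^2 + E\|Y^n_u - Y^{n-1}_u\|^2_{X_u}\bigr)\,du,
\]
with $K$ depending only on $c$.

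Third, I would work on the space of adapted pairs equipped with the exponentially weighted norm
\[
N_\beta(Z, Y)^2 \;=\; \sup_{t\in[0,T]} e^{\beta t}\, E\|Z_t\|^2 + \int_{]0,T]} e^{\beta u}\, E\|Y_u\|^2_{X_u}\,du,
\]
where $\beta$ will be chosen large depending on $K$. A Gronwall-type manipulation of the previous estimate---using the exponential weight to absorb the self-referential $E\|Z^{n+1} - Z^n\|^2$ term on the right---converts it into a genuine contraction $N_\beta(Z^{n+1} - Z^n, Y^{n+1} - Y^n) \leq \rho\, N_\beta(Z^n - Z^{n-1}, Y^n - Y^{n-1})$ with $\rho < 1$. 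Completeness of the underlying $L^2$ space then produces a limit pair $(Z, Y)$; passing to the limit in the defining equation (dominated convergence for the $du$-integral and the $L^2$-isometry for the $dM$-integral, both controlled by the Lipschitz hypotheses) shows that $(Z, Y)$ solves (\ref{eq:GeneralBSDE}), and Lemma \ref{lem:GeneralBSDESolnUnique} delivers the stated uniqueness.

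The main obstacle will be the contraction step, specifically the interaction between $G$ and $Y$. Since $E\|G(u, Z^n_{u-}) - G(u, Z^{n-1}_{u-})\|^2_{X_u}$ is bounded only by $E\|Z^n - Z^{n-1}\|^2$ with no $Y$-smoothing, one must split $\|G_{\text{diff}} + Y_{\text{diff}}\|^2_{X_u}$ via Young's inequality carefully so that a strictly negative $-\tfrac{1}{2}E\|Y^{n+1}_u - Y^n_u\|^2_{X_u}$ survives on the left, and then tune $\beta$ large enough so that all $\|Z\|^2$ contributions on the right are genuinely dominated by the exponentially weighted left-hand side---yielding a true contraction rather than a Gronwall bound that would fail to close the recursion.
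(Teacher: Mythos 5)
Your strategy is viable but diverges from the paper's in two substantive ways, and one of them conceals a real gap. On the structural side: the paper does not freeze both arguments in the Picard scheme. It defines $(Z^{n+1},Y^{n+1})$ as the solution of
\[
Z^{n+1}_t + \int_{]t,T]} F(u, Z^n_u, Y^{n+1}_u)\,du + \int_{]t,T]}\bigl[G(u, Z^{n}_{u-})+Y^{n+1}_{u}\bigr]\,dM_u = Q,
\]
keeping the \emph{new} $Y^{n+1}$ inside $F$, so each iterate is an equation of type (\ref{eq:BSDE2}) and its solvability rests on Lemmas \ref{lem:BSDE2Soln2} and \ref{lem:BSDE2Soln1} rather than Lemma \ref{lem:BSDE1Soln}. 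The payoff is that the $Y$-difference produced by $F$ is the \emph{same} difference $Y^{n+1}-Y^n$ that already appears on the left, so the recursive estimate (\ref{eq:s3I}) carries only $Z$-differences on the right; the paper then iterates the Gronwall bound to obtain a $[\,\cdot\,]^n/n!$ decay, which is summable for every $c$ and $T$ with no contraction constant to tune. Your scheme is simpler per iterate (only Lemma \ref{lem:BSDE1Soln} is needed) but puts $E\|Y^n_u-Y^{n-1}_u\|^2_{X_u}$ on the right and therefore commits you to a genuine Banach contraction.

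That contraction is where your argument, as written, does not close. The coefficient of $\int e^{\beta u}E\|Y^n_u-Y^{n-1}_u\|^2_{X_u}\,du$ can indeed be made $O(1/\beta)$, since it enters only through $\|\delta F\|^2$ via a Young parameter you may couple to $\beta$. But the $G$-term cannot: to keep a positive multiple of $\|Y^{n+1}_u-Y^n_u\|^2_{X_u}$ on the left you must take $y^2<1$, and then $E\|G(u,Z^n_{u-})-G(u,Z^{n-1}_{u-})\|^2_{X_u}\le c^2E\|Z^n_u-Z^{n-1}_u\|^2$ appears on the right with a coefficient bounded below by a fixed constant, with \emph{no} factor of $1/\beta$. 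Measured against your norm, whose $Z$-component is $\sup_t e^{\beta t}E\|Z_t\|^2$, that integral is only controlled by $T$ times the sup, so the contraction ratio retains an irreducible multiple of $c^2T$; increasing $\beta$ does not remove it, and the recursion closes only for small horizons. The standard repair is to take the $Z$-component of the norm to be $\beta\int_{]0,T]}e^{\beta u}E\|Z_u\|^2\,du$: the product rule applied to $e^{\beta u}\|Z^{n+1}_u-Z^n_u\|^2$ places exactly this quantity, at level $n+1$, on the left with coefficient of order $\beta$, while the offending level-$n$ term sits on the right with coefficient of order $c^2$, giving a ratio $O(c^2/\beta)$ and a true contraction for $\beta$ large. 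With that modification your route works and yields the same theorem; alternatively, adopting the paper's iteration and its $1/n!$ summability argument avoids the need for any contraction constant.
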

\begin{proof}
Once again, we shall do this using a Picard-type iteration. We define recursively $(Z^{n+1}, Y^{n+1})$ to be the solution of
\[Z^{n+1}_t + \int_{]t,T]} F(u, Z^n_u, Y^{n+1}_u) du + \int_{]t,T]} G(u, Z^{n}_{u-})+Y_{u}^{n+1} dM_u = Q.\]
Each of these iterates is guaranteed to exist and be unique by Lemmas \ref{lem:BSDE2Soln1} and \ref{lem:BSDE2Soln2}.

Using the same procedure as above we obtain
\begin{equation}\label{eq:s3I}
\begin{split}&E\|Z^{n+1}_t -Z^{n}_t\|^2+\frac{1}{2}\int_{]t,T]}E\|Y^{n+1}_u-Y^{n}_u\|^2_{X_u}du\\
&\qquad \leq (4c^2+1)\int_{]t,T]}\left[E\|Z^{n+1}_u - Z^{n}_u\|^2 +E\|Z^{n}_u - Z^{n-1}_u\|^2\right]du.\end{split}
\end{equation}
This implies
\[E\|Z^{n+1}_t -Z^{n}_t\|^2 \leq (4c^2+1)\int_{]t,T]}[E\|Z^{n+1}_u - Z^{n}_u\|^2 +E\|Z^{n}_u - Z^{n-1}_u\|^2] du.\]
Rearrangement and integration gives
\[\int_{]t,T]}E\|Z^{n+1}_u - Z^{n}_u\|^2du \leq (4c^2+1)\int_{]t,T]}e^{(4c^2+1)(u-t)}\int_{]u,T]}E\|Z^{n}_s - Z^{n-1}_s\|^2dsdu\]
and hence
\[\int_{]t,T]}E\|Z^{n+1}_u -Z^{n}_u\|^2du \leq \frac{[T(4c^2+1)e^{T(4c^2+1)}]^n}{n!}\int_{]t,T]}E\|Z^{1}_u -Z^{0}_u\|^2du.\]
From this, we know that $\{Z^n_t\}$ forms a Cauchy sequence, except possibly on some $dt$-null set, and so the $\mathbb{P}$-a.s. limit exists by completeness and right continuity. Similarly we can then rearrange (\ref{eq:s3I}) to give
\[\int_{]t,T]}E\|Y^{n+1}_u-Y^{n}_u\|^2_{X_u}du\leq 2(4c^2+1)\int_{]t,T]}E\|Z^{n+1}_u - Z^{n}_u\|^2 +E\|Z^{n}_u - Z^{n-1}_u\|^2du,\]
again showing that $\{Y^n\}$ forms a Cauchy sequence, and so its limit exists. Again, these limits can be seen to satisfy (\ref{eq:GeneralBSDE}). The desired uniqueness properties follow from Lemma \ref{lem:GeneralBSDESolnUnique}.
\end{proof}

\section{Conclusion}
We have shown here that for an equation of the form of (\ref{eq:GeneralBSDE}), there exists a square integrable solution $(Z, Y)$, where $Z$ is an $\mathbb{R}^K$ valued adapted process and $Y$ is an $\mathbb{R}^{K\times N}$ valued adapted process. We have shown that, and without loss of generality, $Y$ can be taken to be left-continuous. We have also shown that thise solution is unique up to $\mathbb{P}$-indistinguishability for $Z$ and equality $d\langle M, M\rangle_u\times \mathbb{P}$-a.s. for $Y$.

\par\bigskip\noindent
{\bf Acknowledgment.}  The authors wish to thank the Social Sciences and Humanities Research 
Council of Canada and the Australian Research Council for support.

\bibliographystyle{amsplain}

\end{document}